\newtheorem{theorem}{Theorem}
\newtheorem{lemma}[theorem]{Lemma}
\newenvironment{proof}{\noindent{\scshape Proof.}}{\hspace*{2mm}~$\square$}
\newcommand{\N}{\mathbb{N}}
\newcommand{\Z}{\mathbb{Z}}
\newcommand{\R}{\mathbb{R}}
\newcommand{\ind}{\mathbf{1}}
\newcommand{\ep}{\epsilon}
\newcommand{\blue}[1]{\textcolor{black}{#1}}
\newcommand{\best}{\eta}
\newcommand{\spar}{\bar \eta}
\newcommand{\hypc}{\zeta}
\newcommand{\rich}{\pi}
\newcommand{\boot}{\xi}
\DeclareMathOperator{\card}{card}
\begin{document}
\begin{frontmatter}
\title     {Evolutionary games on the lattice: \\ best-response dynamics}
\runtitle  {Best-response dynamics}
\author    {Stephen Evilsizor and Nicolas Lanchier\thanks{Both authors were partially supported by NSF Grant DMS-10-05282}}
\runauthor {S. Evilsizor and N. Lanchier}
\address   {School of Mathematical and Statistical Sciences \\ Arizona State University \\ Tempe, AZ 85287, USA.}

\maketitle

\begin{abstract} \ \
 The best-response dynamics is an example of an evolutionary game where players update their strategy in order to maximize
 their payoff.
 The main objective of this paper is to study a stochastic spatial version of this game based on the framework of interacting
 particle systems in which players are located on an infinite square lattice.
 In the presence of two strategies, and calling a strategy selfish or altruistic depending on a certain ordering of the
 coefficients of the underlying payoff matrix, a simple analysis of the nonspatial mean-field approximation of the spatial
 model shows that a strategy is evolutionary stable if and only if it is selfish, making the system bistable when both
 strategies are selfish.
 The spatial and nonspatial models agree when at least one strategy is altruistic.
 In contrast, we prove that, in the presence of two selfish strategies \blue{and in any spatial dimensions}, only the most
 selfish strategy remains evolutionary stable.
\blue{The main ingredients of the proof are monotonicity results and a coupling between the best-response dynamics properly
 rescaled in space with bootstrap percolation to compare the infinite time limits of both systems.}
\end{abstract}

\begin{keyword}[class=AMS]
\kwd[Primary ]{60K35, 91A22}
\end{keyword}

\begin{keyword}
\kwd{Interacting particle systems, bootstrap percolation, evolutionary stable strategy.}
\end{keyword}

\end{frontmatter}

%%%%%%%%%%%%%%%%%%%%%%%%%%%%%%%%%%%%%%%%%%%%%%%%%%%%%%%%%%%%%%%%%%%%%%%%%%%%%%%%%%%%%%%%%%%%%%%%%%%%%%%%%%%%%%%%%%%%%%%%%%%%%%%%%%%%%%%%%%

\section{Introduction}
\label{sec:intro}

\indent The framework of evolutionary game theory, which describes the dynamics of populations of individuals identified to players,
 has been initiated by theoretical biologist Maynard Smith and first appeared in his work with Price \cite{maynardsmith_price_1973}.
 Each individual-player is characterized by one of a finite number~$n$ of possible strategies and is attributed a payoff that is
 calculated based on the strategy of the surrounding players and an~$n \times n$ payoff matrix.
 The most popular model of evolutionary game is probably the so-called replicator equation reviewed in \cite{hofbauer_sigmund_1998},
 a system of deterministic differential equations for the frequencies of players holding a given strategy.
 This paper is a sequel of the second author's work \cite{lanchier_2014} continuing the analytical study of evolutionary
 games based on the framework of interacting particle systems which, in contrast with the replicator equation, also includes
 stochasticity and space in the form of local interactions. \vspace*{5pt}

% % % % % % % % % % % % % % % % % % % % % % % % % % % % % % % % % % % % % % % % % % % % % % % % % % % % % % % % % % % % % % % % % % % % %

\noindent{\bf Model description} --
 The version of the best-response dynamics we consider in this paper is a continuous-time Markov chain whose state at time~$t$ is
 a spatial configuration
 $$ \best_t : \Z^d \longrightarrow \blue{\{1, 2 \}} := \hbox{the set of strategies}. $$
 In words, each point of the~$d$-dimensional square lattice is occupied by exactly one player who is characterized by her strategy.
 The spatial structure is included in the form of local interactions assuming that each player's payoff only depends on the
 strategy of her~$2d$ neighbors.
 More precisely, having a~\blue{two by two} payoff matrix~$A = (a_{ij})$ where~$a_{ij}$ is interpreted as the payoff of a player holding
 strategy~$i$ interacting with a player holding strategy~$j$, each configuration is turned into a so-called payoff landscape that
 attributes a payoff to each vertex as follows:
 $$ \begin{array}{rcl} \blue{\phi (x, \best_t)} & := & \blue{(a_{11} \,N_1 (x, \best_t) + a_{12} \,N_2 (x, \best_t)) \ \ind \{\best_t (x) = 1 \}} \vspace*{4pt} \\ && \hspace*{15pt} + \
                                                       \blue{(a_{21} \,N_1 (x, \best_t) + a_{22} \,N_2 (x, \best_t)) \ \ind \{\best_t (x) = 2 \}} \quad \hbox{for all} \quad x \in \Z^d \end{array} $$
 where~$N_j (x, \best_t)$ is the number of type~$j$ neighbors of vertex~$x$, i.e.,
 $$ N_j (x, \best_t) \ := \ \card \,\{y \in \Z^d : y \sim x \ \hbox{and} \ \best_t (y) = j \} $$
 where \blue{the binary relationship~$\sim$ indicates that two vertices are neighbors.}
 In the traditional framework of evolutionary game theory, each strategy is often interpreted as a trait and each payoff defined
 through the payoff landscape as a fitness or reproduction success.
 In particular, evolutionary game theory makes the implicit assumption that players are not rational decision-makers who can choose
 their strategy and that the evolution of the system is driven by births and deaths.
 In contrast, the best-response dynamics assumes that players are rational decision-makers changing their strategy in order to maximize
 their payoff.
 Specifically, we assume that each player updates her strategy at \blue{an exponential rate one} choosing to change her strategy if
 and only if it increases her payoff.
 In particular, in case of a tie, i.e., the player would not change her payoff by changing her strategy, nothing happens.
 More precisely, letting
\begin{equation}
\label{eq:model-1}
  \begin{array}{rcl}
   \phi_1 (x, \best_t) & := & a_{11} \,N_1 (x, \best_t) + a_{12} \,N_2 (x, \best_t) \quad \hbox{for all} \quad x \in \Z^d \vspace*{4pt} \\
   \phi_2 (x, \best_t) & := & a_{21} \,N_1 (x, \best_t) + a_{22} \,N_2 (x, \best_t) \quad \hbox{for all} \quad x \in \Z^d \end{array}
\end{equation}
 be the payoff that the player at~$x$ would receive if she followed strategy~1 and~2, respectively, the best-response dynamics
 is formally described by the Markov generator
\begin{equation}
\label{eq:model-2}
   \begin{array}{l} Lf (\best_t) \ = \
   \sum_x \,\ind \{\phi_1 (x, \best_t) > \phi_2 (x, \best_t) \} \ [f (\best_t^{x, 1}) - f (\best_t)] \vspace*{6pt} \\ \hspace*{70pt} + \
   \sum_x \,\ind \{\phi_1 (x, \best_t) < \phi_2 (x, \best_t) \} \ [f (\best_t^{x, 2}) - f (\best)] \end{array}
\end{equation}
 where the configuration~$\best_t^{x, i}$ is obtained from~$\best_t$ by setting to~$i$ the strategy at~$x$ and leaving the strategy at the
 other vertices unchanged.
 Note that, for any given vertex~$x$, the difference between the two alternative payoffs in \eqref{eq:model-1} can be written as 
 $$ \begin{array}{rcl}
    \phi_1 (x, \best_t) - \phi_2 (x, \best_t) & = &
      (a_{11} \,N_1 (x, \best_t) + a_{12} \,N_2 (x, \best_t)) - (a_{21} \,N_1 (x, \best_t) + a_{22} \,N_2 (x, \best_t)) \vspace*{4pt} \\ & = &
      (a_{11} - a_{21}) \,N_1 (x, \best_t) - (a_{22} - a_{12}) \,N_2 (x, \best_t). \end{array} $$
 In particular, the dynamics only depends on~$a_1 := a_{11} - a_{21}$ and~$a_2 := a_{22} - a_{12}$ rather than all four coefficients
 of the payoff matrix so the Markov generator \eqref{eq:model-2} can be written as
\begin{equation}
\label{eq:model-3}
   \begin{array}{l} Lf (\best_t) \ = \
   \sum_x \,\ind \{a_1 \,N_1 (x, \best_t) > a_2 \,N_2 (x, \best_t) \} \ [f (\best_t^{x, 1}) - f (\best_t)] \vspace*{6pt} \\ \hspace*{70pt} + \
   \sum_x \,\ind \{a_1 \,N_1 (x, \best_t) < a_2 \,N_2 (x, \best_t) \} \ [f (\best_t^{x, 2}) - f (\best_t)]. \end{array}
\end{equation}
 Since the behavior of the system strongly depends on the sign of~$a_1$ and~$a_2$, it is convenient to use the terminology introduced
 in \cite{lanchier_2013, lanchier_2014} by declaring
 strategy~$i$ to be
\begin{itemize}
 \item {\bf altruistic} when~$a_i < 0$, meaning that a player with strategy~$i$ confers a lower payoff to a player following the same strategy
  than to a player following the other strategy, \vspace*{4pt}
 \item {\bf selfish} when~$a_i > 0$, meaning that a player with strategy~$i$ confers a higher payoff to a player following the same strategy
  than to a player following the other strategy.
\end{itemize} \vspace*{5pt}

% % % % % % % % % % % % % % % % % % % % % % % % % % % % % % % % % % % % % % % % % % % % % % % % % % % % % % % % % % % % % % % % % % % % %

\noindent{\bf Mean-field approximation} --
 To understand the role of space in the long-term behavior of the best-response dynamics, the first step is to look at the
 deterministic nonspatial version, or mean-field approximation, of the process \eqref{eq:model-3}.
 This mean-field model is obtained under the assumption that the population is well-mixing, and more precisely by looking at the
 process on the complete graph in which any two players are neighbors and then taking the limit as the number of vertices tends
 to infinity.
 This results in a system of differential equations for the frequency of players holding strategy~$i$ that we denote by~$u_i$.
 In the absence of a spatial structure, the payoff that a player would receive if she followed strategy~1 and 2, respectively, is
 $$ \phi_1 (u_1, u_2) \ = \ a_{11} \,u_1 + a_{12} \,u_2 \qquad \hbox{and} \qquad \phi_2 (u_1, u_2) \ = \ a_{21} \,u_1 + a_{22} \,u_2 $$
 which can be viewed as the nonspatial analog of \eqref{eq:model-1}.
 Also, under the evolution rules of the best-response dynamics, either each type~1 player or each type~2 player changes her strategy
 at \blue{an exponential rate one} depending on whether~$\phi_1 - \phi_2$ is negative or positive, respectively.
 Then, rescaling time by the number of vertices and taking the limit as the number of vertices tends to infinity gives the following
 differential equation for the frequency of type~1 players:
\begin{equation}
\label{eq:meanfield-1}
 \begin{array}{rcl}
   u_1' (t) & = & u_2 \ \ind \{\phi_1 (u_1, u_2) > \phi_2 (u_1, u_2) \} \ - \ u_1 \ \ind \{\phi_1 (u_1, u_2) < \phi_2 (u_1, u_2) \} \vspace*{4pt} \\
            & = & u_2 \ \ind \{a_1 \,u_1 > a_2 \,u_2 \} \ - \ u_1 \ \ind \{a_1 \,u_1 < a_2 \,u_2 \} \vspace*{4pt} \\
            & = & u_2 \ \ind \{(a_1 + a_2) \,u_1 > a_2 \} \ - \ u_1 \ \ind \{(a_1 + a_2) \,u_1 < a_2 \} \end{array}
\end{equation}
 where we used that~$u_1 + u_2 = 1$.
\blue{Letting~$u_* := a_2 \,(a_1 + a_2)^{-1}$, we have
 $$ \begin{array}{rcccc}
      u_1' (t) \ = \ + \,u_2 & \hbox{when} & (u_1 > u_* \ \hbox{and} \ a_1 + a_2 > 0) & \hbox{or} & (u_1 < u_* \ \hbox{and} \ a_1 + a_2 < 0) \vspace*{2pt} \\
      u_1' (t) \ = \ - \,u_1 & \hbox{when} & (u_1 > u_* \ \hbox{and} \ a_1 + a_2 < 0) & \hbox{or} & (u_1 < u_* \ \hbox{and} \ a_1 + a_2 > 0) \end{array} $$
 which shows the following four possible regimes:}
\begin{itemize}
 \item when strategy~1 is selfish and strategy~2 altruistic, strategy~1 wins in the sense that
   starting from any initial condition~$u_1 (t) \to 1$ as~$t \to \infty$. \vspace*{4pt}
 \item when strategy~1 is altruistic and strategy~2 selfish, strategy~2 wins in the sense that
   starting from any initial condition~$u_1 (t) \to 0$ as~$t \to \infty$. \vspace*{4pt}
 \item when both strategies are altruistic, coexistence occurs in the sense that
   starting from any initial condition~$u_1 (t) \to \blue{u_*} \in (0, 1)$ as~$t \to \infty$. \vspace*{4pt}
 \item when both strategies are selfish, the system is bistable:
  $$ \begin{array}{rcl}
       u_1 (t) \to 0 \ \ \hbox{as} \ \ t \to \infty & \hbox{when} & u_1 (0) < \blue{u_*} \in (0, 1) \vspace*{2pt} \\
       u_1 (t) \to 1 \ \ \hbox{as} \ \ t \to \infty & \hbox{when} & u_1 (0) > \blue{u_*} \in (0, 1). \end{array} $$
\end{itemize}
 In terms of evolutionary stable strategy, this indicates that, for well-mixing populations, a strategy is evolutionary stable if it
 is selfish but not if it is altruistic.
\blue{Recall that an evolutionary stable strategy is defined as a strategy which, if adopted by a population, cannot be invaded by any
 alternative strategy starting at an infinitesimally small frequency.} \vspace*{5pt}

% % % % % % % % % % % % % % % % % % % % % % % % % % % % % % % % % % % % % % % % % % % % % % % % % % % % % % % % % % % % % % % % % % % % %

\noindent{\bf Spatial stochastic model} --
 We now return to the spatial model \eqref{eq:model-3} looking at the four parameter regions corresponding to the four possible regimes
 of the mean-field approximation.
 Assuming first that strategy~1 is selfish and strategy~2 altruistic, we get
 $$ a_1 \,N_1 (x, \best_t) - a_2 \,N_2 (x, \best_t) \ = \ a_1 \,N_1 (x, \best_t) + (- a_2)(2d - N_1 (x, \best_t)) \ > \ 0 $$
 for all~$x \in \Z^d$ and all configuration~$\best_t$.
 This shows that each type~2 player changes her strategy at \blue{an exponential rate one} whereas each type~1 player sticks to her
 strategy, therefore strategy~1 wins, just as in the mean-field model, in the sense that for any initial configuration
 $$ \begin{array}{l} \lim_{t \to \infty} \,P \,(\best_t (x) = 1) \ = \ 1 \quad \hbox{for all} \quad x \in \Z^d. \end{array} $$
 By symmetry, strategy~2 wins whenever strategy~1 is altruistic and strategy~2 selfish.
 Note in particular that the ``all 1'' and ``all 2'' configurations are not necessarily absorbing states for the process.
 This is due to the fact that, though the new strategy is chosen based on the strategy of the neighbors, it is not chosen from
 the neighborhood.
 Looking now at altruistic-altruistic interactions, whenever the player at~$x$ and all her neighbors follow the same strategy,
 $$ \begin{array}{rcl}
     a_1 \,N_1 (x, \best_t) - a_2 \,N_2 (x, \best_t) \ = \ + \ 2d \,a_1 \ < \ 0 & \hbox{when} & \best_t (x) = 1 \vspace*{2pt} \\
     a_1 \,N_1 (x, \best_t) - a_2 \,N_2 (x, \best_t) \ = \ - \ 2d \,a_2 \ > \ 0 & \hbox{when} & \best_t (x) = 2. \end{array} $$
 In either case, the player at~$x$ changes her strategy at \blue{an exponential rate one}, indicating that, as in the mean-field model,
 two altruistic strategies coexist in the sense that
 $$ \begin{array}{l} \lim_{t \to \infty} \,P \,(\best_t (x) = \best_t (y)) \ < \ 1 \quad \hbox{for all} \quad x, y \in \Z^d, \ x \neq y. \end{array} $$
 We now study the process when both strategies are selfish, a case more challenging mathematically and also more interesting
 as it shows some important disagreements between the spatial and nonspatial models.
 To confront our results for the spatial model with the bistability displayed by its nonspatial counterpart, we consider the
 process starting from the product measure with
 $$ P \,(\best_0 (x) = 1) \ =: \ p \quad \hbox{for all} \quad x \in \Z^d $$
 and compare the models when~$p = u_1 (0)$.
\blue{The fact that the inclusion of space in the form of local interactions strongly affects the long-term behavior of the system
 can be seen in a specific parameter region using a standard coupling with the Richardson model~\cite{richardson_1973}.}
 Indeed, let
\begin{equation}
\label{eq:change}
  \begin{array}{l} c (x, \best_t) \ := \ \blue{\lim_{h \to 0} \,P \,(\best_{t + h} (x) \neq \best_t (x) \,| \,\best_t)}. \end{array}
\end{equation}
 Then, when~$a_1 > (2d - 1) \,a_2 > 0$ and~$N_1 (x, \best_t) \geq 1$, we have
 $$ \begin{array}{rcl}
     c (x, \best_t \,| \,\best_t (x) = 1) & = & \ind \,\{a_1 \,N_1 (x, \best_t) < a_2 \,N_2 (x, \best_t) \} \ \leq \ \ind \,\{a_1 < (2d - 1) \,a_2 \} \ = \ 0 \vspace*{4pt} \\
     c (x, \best_t \,| \,\best_t (x) = 2) & = & \ind \,\{a_1 \,N_1 (x, \best_t) > a_2 \,N_2 (x, \best_t) \} \ \geq \ \ind \,\{a_1 > (2d - 1) \,a_2 \} \ = \ 1 \end{array} $$
 almost surely.
 These two inequalities imply that the set of type~1 players dominates \blue{stochastically} the set of infected sites in the Richardson
 model~$\rich_t$ with initial configuration
 $$ \blue{\rich_0 (x) \ = \ \ind \,\{\best_0 (x) = 1 \ \hbox{and} \ \best_0 (y) = 1 \ \hbox{for some} \ y \sim x \}} $$
\blue{which, in turns, implies that strategy~1 wins whenever~$p > 0$.
 This shows in particular the existence of parameter regions in which, in contrast with the nonspatial model, only the most selfish
 strategy is evolutionary stable for the spatial model.
 Returning to general selfish-selfish interactions, the numerical simulations of the two-dimensional process displayed in
 Figure~\ref{fig:simulations} suggest that, when~$a_1$ is slightly larger than~$a_2$ and the initial density~$p > 0$ is small, the
 system fixates to a configuration in which the set of type~1 players consists of a union of disjoint rectangles, indicating that strategy~1
 is unable to invade strategy~2.
 These simulations, however, are misleading due to the finiteness of the graph, and it can be proved that, in any dimensions, the most selfish
 strategy always wins even when starting at a low density.
 More precisely, we have the following theorem.
\begin{figure}[t]
\centering
\mbox{\subfigure[$a_1 = 1.01 > a_2 = 1$ and~$p = 0.15$]{\epsfig{figure=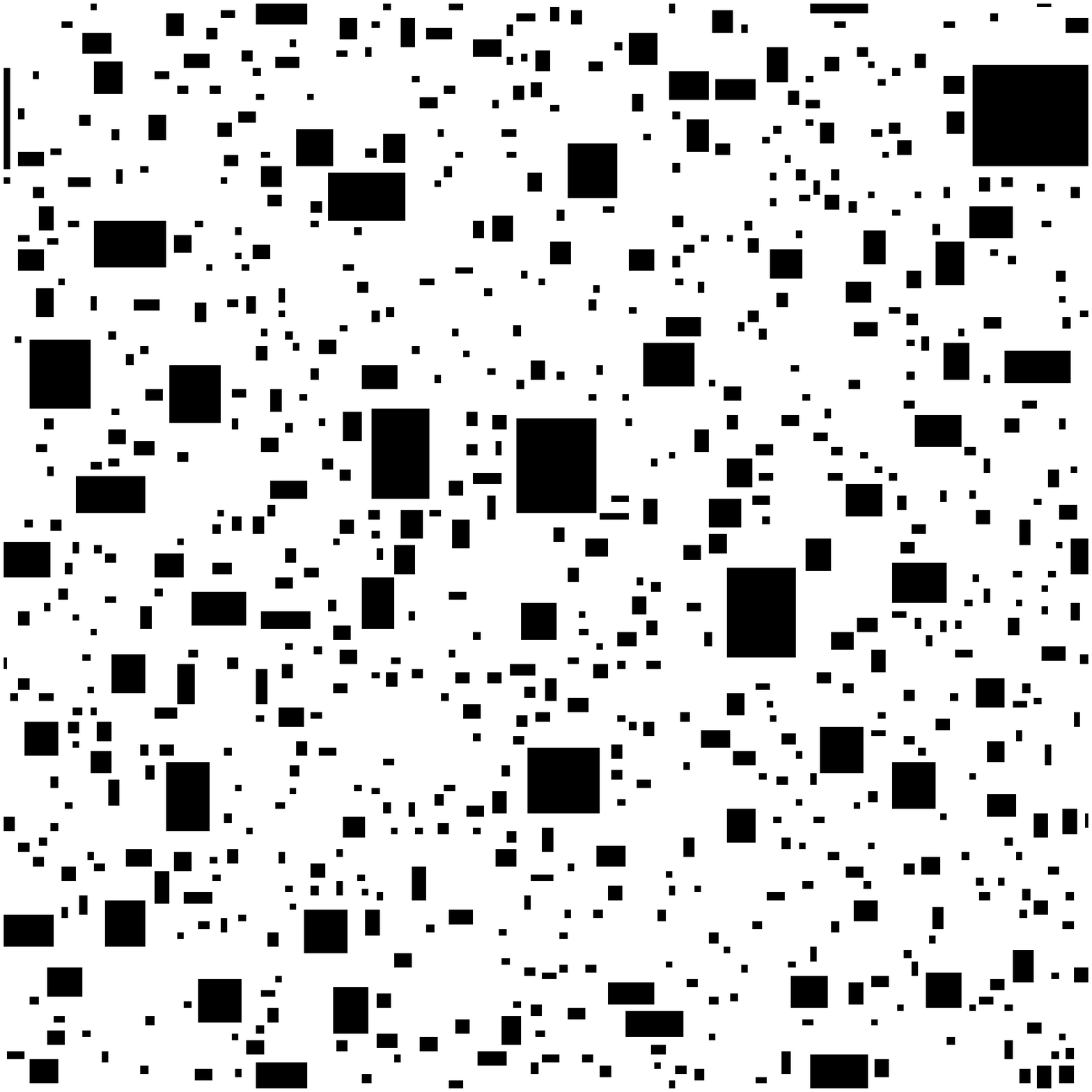, width=0.45\textwidth}} \hspace*{5pt}
      \subfigure[$a_1 = 1.01 > a_2 = 1$ and~$p = 0.20$]{\epsfig{figure=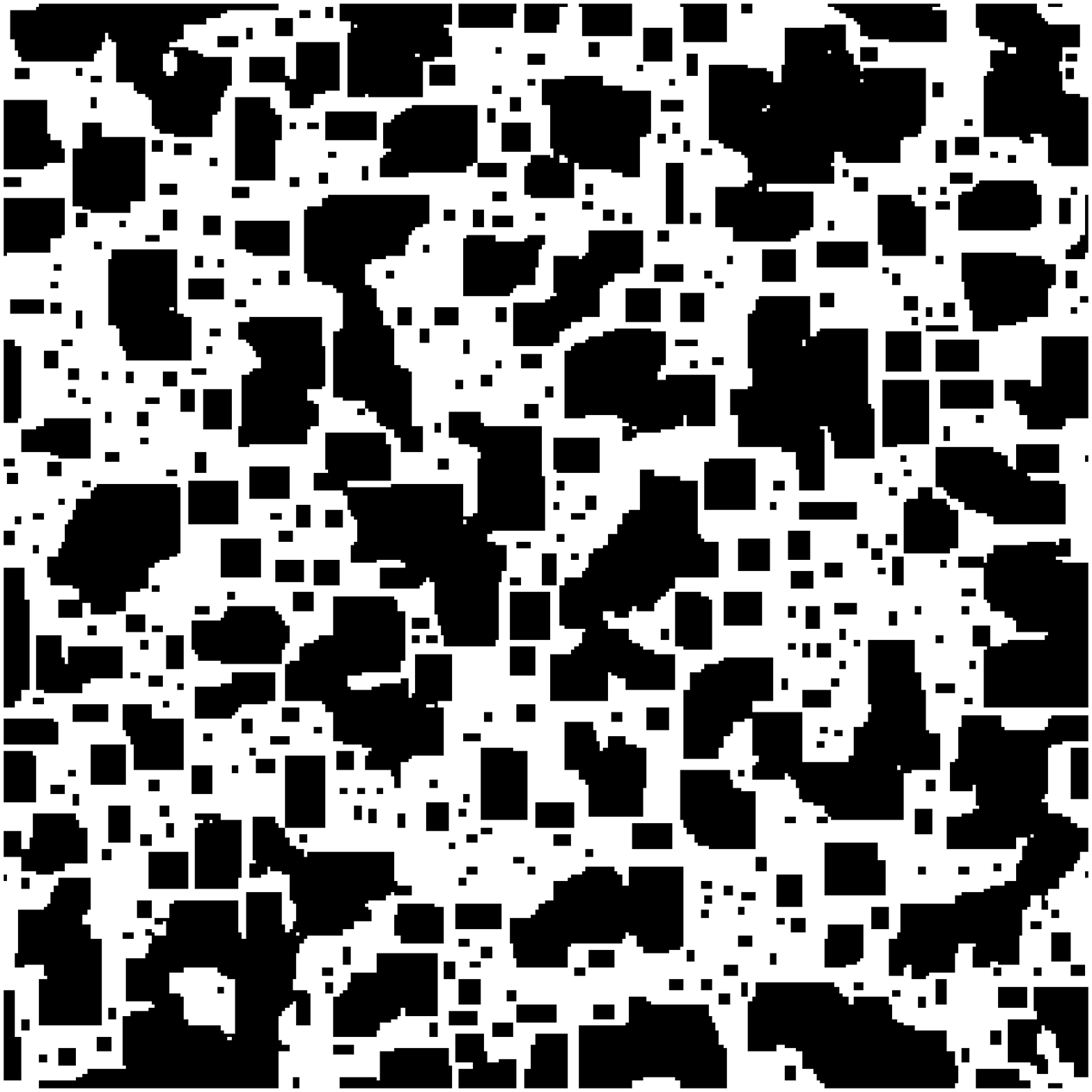, width=0.45\textwidth}}}
\caption{\upshape{Best-response dynamics on a~$300 \times 300$ lattice with periodic boundary conditions starting from a product
 measure with density~$p$ of type 1 players in black.
 On the left picture, the process hits an absorbing state in which both types are present, whereas on the right picture,
 which shows a snapshot of the process at time 25, the system is converging to the all black configuration: strategy~1 wins.}}
\label{fig:simulations}
\end{figure}
\begin{theorem} --
\label{th:selfish}
 Assume that~$a_1 > a_2 > 0$ and~$p > 0$. Then,
 $$ \begin{array}{l} \lim_{t \to \infty} \,P \,(\best_t (x) = 1) \ = \ 1 \quad \hbox{for all} \quad x \in \Z^d. \end{array} $$
\end{theorem}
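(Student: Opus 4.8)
The plan is to recast the dynamics as a deterministic threshold rule and then, after a spatial renormalization, compare it with a (modified) bootstrap percolation that is known to occupy the whole lattice from any positive density. Since $N_1 (x, \best_t) + N_2 (x, \best_t) = 2d$, the update rule in \eqref{eq:model-3} reads: the player at $x$ turns to strategy $1$ when $N_1 (x, \best_t) > \theta$ and to strategy $2$ when $N_1 (x, \best_t) < \theta$, where $\theta := 2d \, a_2 \, (a_1 + a_2)^{-1}$. Because $a_1 > a_2 > 0$ we have $\theta < d$, so, setting $r := \lceil \theta \rceil \le d$ (and assuming $\theta \notin \Z$ to avoid ties, the boundary case being similar), the rule becomes: at rate one, $x$ turns to $1$ if $N_1 (x, \best_t) \ge r$ and turns to $2$ if $N_1 (x, \best_t) \le r - 1$. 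Note the built-in bias in favour of strategy $1$: it needs only $r \le d$ like-neighbours to conquer a vertex whereas strategy $2$ needs $2d - r + 1 \ge d + 1$.

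First I would prove that the process is attractive with respect to the order $\best \preceq \best'$ if and only if $\{\best = 1\} \subseteq \{\best' = 1\}$: the flip-to-$1$ rate $\ind \{N_1 \ge r\}$ is nondecreasing and the flip-to-$2$ rate $\ind \{N_1 \le r - 1\}$ is nonincreasing in the configuration, so the basic coupling preserves the order. Attractiveness then lets me use monotone lower bounds throughout (and, if desired, reduce to small $p$). Next I would record the key stability fact: a solid rectangular block of $1$'s with all side-lengths at least two is invariant, because each of its vertices --- even a corner, which has exactly $d$ neighbours inside the block --- satisfies $N_1 \ge d \ge r$ and hence never switches, whatever the surrounding configuration. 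Such blocks are the persistent structures that will play the role of occupied sites.

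Then comes the renormalization. Tile $\Z^d$ into boxes $B_z = Lz + \{0, \dots, L - 1\}^d$ and call $B_z$ \emph{good} if it is entirely of type $1$. Under the product initial law the events $\{B_z \text{ good at time } 0\}$ are independent across $z$ and each has probability $q (L) = q (L, p) > 0$ (indeed $\to 1$ as $L \to \infty$). The heart of the argument is a growth lemma: if $B_z$ has $r$ good neighbours sitting in $r$ distinct coordinate directions, then $B_z$ becomes good after a finite time and stays good. Indeed, the corner vertex of $B_z$ facing those $r$ full boxes has a type-$1$ neighbour across each of the $r$ shared faces, hence $N_1 \ge r$, and it turns to $1$; the ensuing corner cascade fills $B_z$ with no vertex ever dropping below $N_1 = r$. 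Rigorously, once the triggering faces are present (they persist, being parts of invariant full boxes), the restriction of the process to $B_z$ is a finite Markov chain for which the all-$1$ configuration is absorbing and reachable from every state, hence reached almost surely in finite time; attractiveness guarantees that restoring the rest of the lattice only helps. Combining persistence and the growth lemma, the set of good boxes dominates a modified bootstrap percolation on the renormalized lattice --- the variant in which a site becomes occupied once it has occupied neighbours in $r \le d$ distinct directions --- started from Bernoulli $(q (L))$ product measure. Since that process has vanishing critical probability, it occupies all of $\Z^d$ almost surely, so every box is eventually good and therefore $\lim_{t \to \infty} P \,(\best_t (x) = 1) = 1$ for every $x$.

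The main obstacle is the growth lemma, and within it the fact that the spin system is not monotone at the level of single vertices: a lone type-$1$ player reverts, so one cannot dominate bootstrap percolation by the raw set of $1$'s. The renormalization is exactly what repairs this --- only the robust, self-sustaining full boxes are promoted to occupied sites --- but making the corner cascade and its permanence rigorous (ruling out transient flip-backs that could un-fill a box before it is complete) is the delicate point, handled by the finite-state absorption argument above together with attractiveness. A secondary point worth stressing is that the correct comparison object is the modified, directional bootstrap percolation rather than the standard one: two full boxes on opposite faces of $B_z$ create no vertex with two type-$1$ neighbours, so only neighbours in distinct coordinate directions drive the growth; fortunately the critical probability still vanishes in this variant for $r \le d$.
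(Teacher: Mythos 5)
Your proposal follows essentially the same route as the paper: attractiveness under selfish--selfish interactions, renormalization onto boxes that are entirely of type~1 (the paper takes side length~$L=2$), the observation that such a box is permanently invariant because each of its vertices keeps at least~$d$ like neighbours inside it, a corner cascade showing that a box flanked by enough full boxes in distinct coordinate directions fills up and stays full, and finally a comparison with bootstrap percolation at threshold at most~$d$, whose critical density on~$\Z^d$ vanishes. Your reformulation as a threshold rule with~$r=\lceil\theta\rceil\le d$ is a clean refinement (the paper simply works with the worst case~$r=d$ via~$N_1\ge N_2$ iff~$N_1\ge d$). One phrasing to repair: the ``restriction of the process to~$B_z$'' is not literally a finite Markov chain, since the flip rates on the boundary of~$B_z$ depend on the evolving exterior. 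The cleaner way to finish the growth lemma --- and what the paper does through its iteration of the map~$\Phi$ --- is to note that each vertex reached by the cascade has~$r$ \emph{permanently} type-1 lower neighbours (in the full boxes or earlier in the cascade), hence flips to~1 at its first clock ring after those are in place and can never flip back because flipping back requires~$N_1\le r-1$. This gives the almost sure filling of~$B_z$ without any irreducibility argument for a restricted chain.

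The one step that genuinely needs an argument is your closing assertion that the \emph{directional} bootstrap percolation --- occupation once there are occupied neighbours in~$r\le d$ distinct coordinate directions --- has vanishing critical density. You are right that this, and not the standard~$m=d$ rule, is what the good-box process dominates: for~$L\ge 2$ no vertex of~$B_z$ is adjacent to two boxes on opposite faces, so such a pair does not combine, and the ``without loss of generality by symmetry'' reduction to~$H_{z-e_1},\dots,H_{z-e_d}$ used in the paper's coupling lemma is itself not fully general; you deserve credit for noticing this. But Schonmann's theorem as usually cited concerns the standard threshold rule, so you cannot quote it verbatim for the directional variant. The claim is true and reparable with modest extra work: the hyperrectangle-growth mechanism behind~$q_c=0$ for threshold-$d$ bootstrap percolation only ever creates occupied sites having one occupied neighbour in the already-full hyperrectangle and the remaining~$d-1$ occupied neighbours in the slab currently being filled, all in distinct coordinate directions, so the same induction on dimension applies to the directional rule; and the directional rule with~$r<d$ dominates the one with~$r=d$. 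Without at least this observation the final comparison is an assertion rather than a proof, and it is precisely the point where both your argument and the paper's are at their most delicate.
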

 In particular, while any selfish strategy is evolutionary stable in the nonspatial model, only the most selfish strategy is evolutionary
 stable in the spatial model.
 The result in one dimension directly follows from our coupling with the Richardson model since
 $$ (2d - 1) \,a_2 \ = \ a_2 \quad \hbox{when} \quad d = 1 $$
 while the general result relies on a combination of monotonicity results and coupling arguments to compare the best-response dynamics
 with bootstrap percolation.
 More precisely, we first prove that, in the presence of selfish-selfish interactions, the best-response dynamics is attractive, which
 allows to focus on the process starting from a certain reduced configuration that consists of a union of hyperrectangles.
 The second ingredient is to show that, for the process starting from this reduced configuration, the set of type~1 players is a pure growth process, just
 like the Richardson model.
 This strong monotonicity result is then applied repeatedly to show that the best-response dynamics properly rescaled in space dominates
 stochastically bootstrap percolation with parameter~$d$.
 From this domination and a result due to Schonmann~\cite[Theorem~3.1]{schonmann_1992}, we finally deduce that, unlike
 what Figure~\ref{fig:simulations} suggests, the most selfish strategy indeed invades the entire lattice.}

%%%%%%%%%%%%%%%%%%%%%%%%%%%%%%%%%%%%%%%%%%%%%%%%%%%%%%%%%%%%%%%%%%%%%%%%%%%%%%%%%%%%%%%%%%%%%%%%%%%%%%%%%%%%%%%%%%%%%%%%%%%%%%%%%%%%%%%%%%

\section{Some monotonicity results}
\label{sec:monotonicity}

\indent To avoid cumbersome notations, it is convenient to sometimes think of the state of the process as a subset rather than a
 function by using the identification:
 $$ \blue{\best_t \ \equiv \ \{x \in \Z^d : \best_t (x) = 1 \} \ \subset \ \Z^d.} $$
 One key ingredient is to think of the process as being constructed from a so-called Harris' graphical representation~\cite{harris_1972}
 which, in the case of the best-response dynamics, reduces to a collection of independent Poisson processes.
 More precisely,
\begin{itemize}
 \item for each~$x \in \Z^d$, we let~$(N_t (x) : t \geq 0)$ be a rate one Poisson process and \vspace*{4pt}
 \item we denote by~$T_n (x) := \inf \,\{t : N_t (x) = n \}$ its~$n$th arrival time.
\end{itemize}
 The configuration at time~$t := T_n (x)$ is obtained from~$\best_{t-}$ by
 $$ \begin{array}{rcl}
      \hbox{adding} \ \ x & \hbox{when} & a_1 \,N_1 (x, \best_{t-}) > a_2 \,N_2 (x, \best_{t-}) \vspace*{2pt} \\
    \hbox{removing} \ \ x & \hbox{when} & a_1 \,N_1 (x, \best_{t-}) < a_2 \,N_2 (x, \best_{t-}). \end{array} $$
 An argument due to Harris \cite{harris_1972} implies that the best-response dynamics starting from any initial configuration can indeed
 be constructed using this rule.
 The next lemma shows that, in the presence of selfish-selfish interactions, the best-response dynamics is attractive.
\begin{lemma} --
\label{lem:attractive}
 The process with~$a_1 > 0$ and~$a_2 > 0$ is attractive:
 $$ P \,(x \in \spar_t) \ \leq \ P \,(x \in \best_t) \quad \hbox{whenever} \quad \spar_0 \subset \best_0. $$
\end{lemma}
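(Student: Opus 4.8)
The plan is to prove the stronger pathwise statement that the two processes can be coupled so that inclusion of the type~1 sets is preserved at all times, from which the inequality on marginals follows immediately. Concretely, I would construct both~$\spar_t$ and~$\best_t$ on the same probability space from a single Harris graphical representation: the same collection of rate one Poisson clocks~$(N_t (x) : t \geq 0)$ drives both processes, and at each arrival time~$T_n (x)$ both configurations attempt to update the state at~$x$ using the same rule recalled just before the lemma. Viewing the states as subsets of~$\Z^d$ via the identification~$\best_t \equiv \{x : \best_t (x) = 1\}$, it then suffices to show that the event~$\{\spar_t \subset \best_t\}$ is preserved across each update, since it holds at time~$0$ by assumption.

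Because updates act at a single site and leave all other sites unchanged, it is enough to fix a clock ring at~$x$ and check that inclusion cannot be broken there, assuming~$\spar_{t-} \subset \best_{t-}$. The inclusion of the smaller configuration yields the neighbor-count inequalities
$$ N_1 (x, \spar_{t-}) \ \leq \ N_1 (x, \best_{t-}) \qquad \hbox{and} \qquad N_2 (x, \spar_{t-}) \ \geq \ N_2 (x, \best_{t-}), $$
the second because~$N_1 + N_2 = 2d$ at every vertex. Inclusion at~$x$ can only fail if either~$x$ is added to~$\spar$ but not to~$\best$, or~$x$ is removed from~$\best$ but not from~$\spar$. In the first case, suppose the smaller process adds~$x$, i.e.,~$a_1 \,N_1 (x, \spar_{t-}) > a_2 \,N_2 (x, \spar_{t-})$; using~$a_1, a_2 > 0$ together with the displayed inequalities gives
$$ a_1 \,N_1 (x, \best_{t-}) \ \geq \ a_1 \,N_1 (x, \spar_{t-}) \ > \ a_2 \,N_2 (x, \spar_{t-}) \ \geq \ a_2 \,N_2 (x, \best_{t-}), $$
so the larger process adds~$x$ as well. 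In the second case, suppose the larger process removes~$x$, i.e.,~$a_1 \,N_1 (x, \best_{t-}) < a_2 \,N_2 (x, \best_{t-})$; the analogous estimate then forces~$a_1 \,N_1 (x, \spar_{t-}) < a_2 \,N_2 (x, \spar_{t-})$, so the smaller process removes~$x$ too. In both dangerous cases inclusion is maintained, while the remaining transitions (adding to~$\best$ only, or removing from~$\spar$ only) trivially cannot break~$\spar_t \subset \best_t$.

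The positivity assumption~$a_1, a_2 > 0$ is exactly what makes the two indicator rates in the generator~\eqref{eq:model-3} monotone in the configuration: rewriting the threshold as~$(a_1 + a_2) \,N_1 (x, \cdot) > 2d \,a_2$ shows that a larger set of type~1 neighbors can only favor the adoption of strategy~1. This is the step I regard as the crux of the argument, and it is precisely where selfish-selfish interactions are essential; for altruistic strategies the direction of the inequalities reverses and the coupling fails, consistent with the coexistence regime discussed above. I do not expect a serious obstacle beyond getting this monotonicity right—the only mild subtlety is the treatment of ties, but these correspond to the harmless ``nothing happens'' transition and never produce the forbidden pattern. Once inclusion holds almost surely in the coupled construction, we obtain~$\ind \{x \in \spar_t\} \leq \ind \{x \in \best_t\}$ pointwise, and taking expectations yields the stated inequality.
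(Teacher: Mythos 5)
Your proof is correct and rests on exactly the same monotonicity computation as the paper's: the inequalities $a_1 \,N_1 (x, \spar_{t-}) \leq a_1 \,N_1 (x, \best_{t-})$ and $a_2 \,N_2 (x, \spar_{t-}) \geq a_2 \,N_2 (x, \best_{t-})$, valid because $a_1, a_2 > 0$ and $N_1 + N_2 = 2d$, which order the add/remove indicators correctly under inclusion. The only difference is presentational: the paper uses these inequalities to verify condition (B14) of Liggett and cites the general attractiveness theorem, whereas you unpack that theorem by running the basic coupling on a common Harris graphical representation and checking case by case that inclusion survives each clock ring.
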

\begin{proof}
 Let~$\spar_t \subset \best_t$.
 Since~$a_1 > 0$ and~$a_2 > 0$,
\begin{equation}
\label{eq:attractive-1}
   a_1 \,N_1 (x, \spar_t) \ \leq \ a_1 \,N_1 (x, \best_t) \quad \hbox{and} \quad a_2 \,N_2 (x, \spar_t) \ \geq \ a_2 \,N_2 (x, \best_t).
\end{equation}
 Let~$c (x, \best_t)$ be defined as in~\eqref{eq:change}.
 Using \eqref{eq:attractive-1}, we obtain that, for all~$x \in \spar_t$,
\begin{equation}
\label{eq:attractive-2}
 \begin{array}{rcl}
   c (x, \spar_t) & = &    \ind \,\{a_1 \,N_1 (x, \spar_t) < a_2 \,N_2 (x, \spar_t) \} \vspace*{4pt} \\
                       & \geq & \ind \,\{a_1 \,N_1 (x, \best_t) < a_2 \,N_2 (x, \best_t) \} \ = \ c (x, \best_t). \end{array}
\end{equation}
 Similarly, for all~$x \notin \best_t$, we have
\begin{equation}
\label{eq:attractive-3}
 \begin{array}{rcl}
   c (x, \spar_t) & = &    \ind \,\{a_1 \,N_1 (x, \spar_t) > a_2 \,N_2 (x, \spar_t) \} \vspace*{4pt} \\
                       & \leq & \ind \,\{a_1 \,N_1 (x, \best_t) > a_2 \,N_2 (x, \best_t) \} \ = \ c (x, \best_t). \end{array}
\end{equation}
 The inequalities \eqref{eq:attractive-2}--\eqref{eq:attractive-3} show that condition (B14) in Liggett \cite{liggett_1999} are satisfied, which
 proves that, in the presence of selfish-selfish interactions, the process is attractive.
\end{proof} \\ \\
 In addition to attractiveness, a key ingredient to prove our theorem is to replace the initial configuration~$\best_0$ with a specific reduced
 initial configuration~$\spar_0$.
 To define this new initial configuration, we introduce the following collection of hypercubes:
 $$ H_z \ := \ 2z + \{0, 1 \}^d \quad \hbox{for all} \quad z \in \Z^d. $$
 Then, given~$\best_0$, we say that~$H_z$ is a type~1 hypercube whenever~$H_z \subset \best_0$ and define
\begin{equation}
\label{eq:reduce}
  \begin{array}{rrl}
   \spar_0 & := & \{x \in \Z^d : x \in H_z \ \hbox{and} \ H_z \subset \best_0 \ \hbox{for some} \ z \in \Z^d \} \vspace*{4pt} \\
                &  = & \ \hbox{the union of all type~1 hypercubes}. \end{array}
\end{equation}
 Note that~$\spar_0 \subset \best_0$ therefore, according to Lemma \ref{lem:attractive},
 $$ P \,(x \in \spar_t) \ \leq \ P \,(x \in \best_t) \quad \hbox{for all} \quad (x, t) \in \Z^d \times \R_+. $$
 In particular, it suffices to prove the theorem for the modified process~$\spar_t$ \blue{that we call from now on
 the {\bf sparse} best-response dynamics}.
 The main reason for working with this process appears in the next lemma which states that, starting from any configuration that consists
 of a union of hypercubes, the process can only increase.
 This somewhat strong result is due in part to the fact that, while the time of the updates are random, the outcome at each update
 is deterministic.
\begin{lemma} --
\label{lem:expand}
 Assume that~$a_1 > a_2 > 0$.
\blue{Then,~$P \,(\spar_s \subset \spar_t \ \hbox{for all} \ s < t) = 1$.}
\end{lemma}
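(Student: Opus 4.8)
The plan is to prove the stronger statement that, in the sparse best-response dynamics, no type~1 vertex is ever removed. Since the only transitions are additions and removals, and nothing happens in case of a tie, this is exactly the assertion that $\spar_t$ is non-decreasing, i.e., that $P \,(\spar_s \subset \spar_t \ \hbox{for all} \ s < t) = 1$. The first step is to rewrite the flip rule in terms of a single threshold. Using $N_1 (x, \spar_t) + N_2 (x, \spar_t) = 2d$, the update at~$x$ adds~$x$ exactly when $a_1 \,N_1 (x, \spar_{t-}) > a_2 \,N_2 (x, \spar_{t-})$, equivalently $N_1 (x, \spar_{t-}) > \theta$, and removes~$x$ exactly when $N_1 (x, \spar_{t-}) < \theta$, where $\theta := 2d \,a_2 \,(a_1 + a_2)^{-1}$. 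The hypothesis $a_1 > a_2 > 0$ enters precisely here: it forces $\theta < d$, so a type~1 vertex possessing at least~$d$ type~1 neighbors is never removed.

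Next I would run a first-removal-time argument on the graphical representation. Because the Poisson arrival times~$T_n (x)$ are locally finite, I can let~$\tau$ be the first time at which a removal occurs, working inside a fixed finite space-time window and then letting the window exhaust $\Z^d \times \R_+$. By definition of~$\tau$, only additions take place on~$[0, \tau)$, so $\spar_s \subset \spar_{s'}$ whenever $s < s' < \tau$; in particular, for every fixed vertex~$y$ the count $N_1 (y, \spar_s)$ is non-decreasing on~$[0, \tau)$.

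The third step is to rule out the removal at~$\tau$. Let~$x$ be the vertex removed at~$\tau$, so that $\spar_{\tau-} (x) = 1$ and $N_1 (x, \spar_{\tau-}) < \theta$. There are two ways~$x$ could have become type~1. If $x \in \spar_0$, then~$x$ lies in a complete hypercube $H_z \subset \spar_0$, and its~$d$ neighbors inside~$H_z$, obtained by flipping each of the~$d$ coordinates of~$x$ within~$\{0, 1 \}^d$, belong to $\spar_0 \subset \spar_{\tau-}$ by the growth on~$[0, \tau)$; hence $N_1 (x, \spar_{\tau-}) \geq d > \theta$, a contradiction. Otherwise~$x$ was added at some time $s < \tau$, at which moment $N_1 (x, \spar_{s-}) > \theta$; monotonicity of the neighbor count on~$[s, \tau)$ then gives $N_1 (x, \spar_{\tau-}) \geq N_1 (x, \spar_{s-}) > \theta$, again contradicting $N_1 (x, \spar_{\tau-}) < \theta$. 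In both cases we reach a contradiction, so $\tau = \infty$ almost surely, which is the desired pure-growth statement.

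The conceptual engine is the monotonicity of the neighbor count together with the margin furnished by the hypercube structure, and I expect the main obstacle to be the two places where the argument could break. First, the reduction to hypercube initial data is indispensable: an isolated type~1 vertex has only type~2 neighbors and would be removed at its first update, so it is the completeness of each~$H_z$ that supplies the $d > \theta$ type~1 neighbors protecting the initial vertices. Second, once growth is known up to time~$\tau$, a newly created type~1 vertex can never lose type~1 neighbors, so its defining inequality $N_1 > \theta$ is self-sustaining and guarantees it is never later removed; making this rigorous in infinite volume requires only the standard localization of the Harris construction to finite regions, after which the first-removal-time argument applies verbatim.
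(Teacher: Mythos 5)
Your proposal is correct and follows essentially the same route as the paper: your threshold invariant ``every type~1 vertex has $N_1 > \theta$ with $\theta < d$'' is exactly the paper's inclusion $\spar_t \subset \Phi(\spar_t)$, your two cases (protected by the initial hypercube versus added with strict surplus that monotonicity preserves) mirror the paper's \eqref{eq:expand-3} and \eqref{eq:expand-2}, and your appeal to the Harris localization into finite non-interacting clusters is precisely how the paper makes the ``first removal'' induction rigorous in infinite volume. The only caution is that a ``fixed finite space-time window'' alone would not suffice (the restricted dynamics is not autonomous); it is the cluster decomposition for small $\ep$, which you do invoke, that carries the argument.
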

\begin{proof}
 Let~$\Phi$ be the function defined on the set of configurations by
\begin{equation}
\label{eq:expand-1}
  \begin{array}{rcl}
    \Phi (\best_t) & := & \{x \in \Z^d : a_1 \,N_1 (x, \best_t) > a_2 \,N_2 (x, \best_t) \vspace*{4pt} \\ && \hspace*{20pt}
                          \hbox{or} \ (x \in \best_t \ \hbox{and} \ a_1 \,N_1 (x, \best_t) = a_2 \,N_2 (x, \best_t)) \}. \end{array}
\end{equation}
 In words, while~$\best_t$ represents the set of vertices following strategy~1, configuration~$\Phi (\best_t)$ can be seen as the
 set of vertices that will become or stay of type~1 at the next update provided the configuration in their neighborhood does not
 change by the time of the update.
 Note that, due to the presence of selfish-selfish interactions:~$a_1 > 0$ and~$a_2 > 0$, we have
\begin{equation}
\label{eq:expand-2}
  \begin{array}{rcl}
   \best_t \,\subset \,\best_t' & \hbox{implies that} & N_1 (x, \best_t) \leq N_1 (x, \best_t') \ \hbox{and} \ N_2 (x, \best_t) \geq N_2 (x, \best_t') \vspace*{4pt} \\
                                & \hbox{implies that} & a_1 \,N_1 (x, \best_t) - a_2 \,N_2 (x, \best_t) \leq a_1 \,N_1 (x, \best_t') - a_2 \,N_2 (x, \best_t') \vspace*{4pt} \\
                                & \hbox{implies that} & \Phi (\best_t) \,\subset \,\Phi (\best_t') \end{array}
\end{equation}
 indicating that the function~$\Phi$ is nondecreasing.
 In addition, for any configuration~$\spar_0$ obtained by reduction of an arbitrary initial configuration using the partition into
 hypercubes, since each type~1 player has at least~$d$~type~1 neighbors and~$a_1 > a_2 > 0$, we also have
\begin{equation}
\label{eq:expand-3}
  \begin{array}{rcl}
   x \in \spar_0 & \hbox{implies that} & N_1 (x, \spar_0) \geq d \ \hbox{and} \ N_2 (x, \spar_0) \leq d \vspace*{4pt} \\
                     & \hbox{implies that} & a_1 \,N_1 (x, \spar_0) > a_2 \,N_2 (x, \spar_0) \vspace*{4pt} \\
                     & \hbox{implies that} & x \in \Phi (\spar_0) \end{array}
\end{equation}
 indicating that~$\spar_0 \subset \Phi (\spar_0)$.
 Monotonicity \eqref{eq:expand-2} and the generalization of \eqref{eq:expand-3} to all times are the main two ingredients to establish
 the lemma that we prove by induction.
 Since the lattice is infinite, the time of the first update does not exist.
 Also, in order to prove the result inductively, the next step is to use an idea of Harris \cite{harris_1972} to break down the lattice
 into finite islands that do not interact with each other for a short time.
 More precisely, we do the following construction:
\begin{itemize}
 \item we let~$\ep > 0$ be small and, for each vertex~$x$ such that~$T_1 (x) < \ep$, draw a line segment between~$x$ and each of its~$2d$ nearest neighbors.
\end{itemize}
 This construction naturally induces a partition of the lattice into clusters, where two vertices belong to the same cluster if there
 is a sequence of line segments connecting them.
 In addition, since the probability of two neighbors~$x \sim y$ being connected by a line segment
 $$ \begin{array}{l}
      P \,(\hbox{there is a line segment between~$x$ and~$y$}) \vspace*{4pt} \\ \hspace*{40pt} = \ P \,(\min (T_1 (x), T_1 (y)) < \ep) \ = \ 1 - e^{- 2 \ep} \end{array} $$
 can be made arbitrarily small by choosing time~$\ep > 0$ small, Theorem 1.33 in \cite{grimmett_1989} implies that there exists~$\ep > 0$
 small, fixed from now on, such that each cluster is almost surely finite.
 Letting~$A$ be an arbitrary, necessarily finite, cluster, we have the following two properties:
\begin{enumerate}
 \item[(a)] the configuration in~$A$ at time~$\ep$ only depends on the initial configuration of the process and its graphical representation restricted to the cluster~$A$. \vspace*{4pt}
 \item[(b)] whenever ($x \in A$ and~$N_x \not \subset A$) or ($x \in A^c$ and~$N_x \not \subset A^c$) \blue{where~$N_x$ refers to the interaction neighborhood of vertex~$x$},
  the strategy at~$x$ is not updated before time~$\ep$.
\end{enumerate}
 Now, since~$A$ is finite, the number of updates in~$A$ up to time~$\ep$ is almost surely finite and therefore can be ordered.
 Let the times of these updates and their corresponding locations be
 $$ s_0 := 0 < s_1 < s_2 < \cdots < s_m < \ep \quad \hbox{and} \quad x_1, x_2, \ldots, x_m \in A. $$
 By (a) and the definition of the function~$\Phi$, we have
 $$ x_1 \in \spar_{s_1} \quad \hbox{if and only if} \quad x_1 \in \Phi (\spar_0). $$
 But according to \eqref{eq:expand-3}, we also have~$\spar_0 \subset \Phi (\spar_0)$ therefore
\begin{equation}
\label{eq:expand-4}
   (x_1 \in \spar_{s_0} \ \ \hbox{implies} \ \ x_1 \in \spar_{s_1}) \quad \hbox{so} \quad
   (\spar_{s_0} \cap A) \,\subset \,(\spar_{s_1} \cap A) \,\subset \,(\Phi (\spar_{s_0}) \cap A).
\end{equation}
 This, together with (b) and the monotonicity of~$\Phi$ in \eqref{eq:expand-2}, implies
\begin{equation}
\label{eq:expand-5}
   (\Phi (\spar_{s_0}) \cap A) \,\subset \,(\Phi (\spar_{s_1}) \cap A) \quad \hbox{and} \quad
         (\spar_{s_1} \cap A) \,\subset \,(\Phi (\spar_{s_1}) \cap A).
\end{equation}
 The last inclusion in \eqref{eq:expand-5} allows us to repeat the same reasoning to get \eqref{eq:expand-4}--\eqref{eq:expand-5} at the next
 update time, and so on up to time~$s_m$.
 Using in addition the obvious fact that the configuration in the cluster~$A$ does not change between two consecutive updates implies that the
 property to be proved holds at all times smaller than~$\ep$ so we have
\begin{equation}
\label{eq:expand-6}
   (\spar_s \cap A) \,\subset \,(\spar_t \cap A) \quad \hbox{and} \quad (\spar_t \cap A) \,\subset \,(\Phi (\spar_t) \cap A) \quad \hbox{for all} \quad s < t \leq \ep.
\end{equation}
 This only proves the result for the process restricted to~$A$ and up to time~$\ep$.
 To extend the result across the lattice and for all times, we first use that the set of all the clusters forms a partition of
 the lattice and sum \eqref{eq:expand-6} over all the possible clusters:
\begin{equation}
\label{eq:expand-7}
  \begin{array}{l}
    \spar_s \ = \ \bigcup_A \,(\spar_s \cap A) \ \subset \ \bigcup_A \,(\spar_t \cap A) \ = \ \spar_t \quad \hbox{for all} \quad s < t \leq \ep \vspace*{4pt} \\
    \spar_{\ep} \ = \ \bigcup_A \,(\spar_{\ep} \cap A) \ \subset \ \bigcup_A \,(\Phi (\spar_{\ep}) \cap A) \ = \ \Phi (\spar_{\ep}). \end{array} 
\end{equation}
 This first inclusion proves the lemma up to time~$\ep$ while the second inclusion can be used, together with the fact that the
 process is Markov, to restart the argument and extend the result inductively up to time~$2 \ep$, then~$3 \ep$, and so on.
 This proves the result at all times.
\end{proof}

%%%%%%%%%%%%%%%%%%%%%%%%%%%%%%%%%%%%%%%%%%%%%%%%%%%%%%%%%%%%%%%%%%%%%%%%%%%%%%%%%%%%%%%%%%%%%%%%%%%%%%%%%%%%%%%%%%%%%%%%%%%%%%%%%%%%%%%%%%

\section{Coupling with bootstrap percolation}
\label{sec:bootstrap}

\indent \blue{This section is devoted to the proof of the theorem, which relies on a coupling between bootstrap percolation and the
 best-response dynamics.
 Bootstrap percolation with parameter~$m$ is the discrete-time process whose state at time~$t$ is a spatial configuration
 $$ \boot_t : \Z^d \longrightarrow \{0, 1 \} \quad \hbox{where} \quad 0 = \hbox{empty} \quad \hbox{and} \quad 1 = \hbox{occupied} $$
 that evolves deterministically as follows: for all~$z \in \Z^d$~and~$t \in \N$,
 $$ \begin{array}{rcl}
    \boot_t (z) = 1 & \hbox{implies that} & \boot_{t + 1} (z) = 1 \vspace*{4pt} \\
    \boot_t (z) = 0 & \hbox{implies that} & \boot_{t + 1} (z) = 1 \ \ \hbox{if and only if} \ \ \card \,\{w \sim z : \boot_t (w) = 1 \} \,\geq \,m. \end{array} $$
 In view of Lemma~\ref{lem:expand} for the sparse best-response dynamics and the evolution rules of bootstrap percolation,
 both processes are almost surely monotone, therefore the limits
 $$ \begin{array}{l} \spar_{\infty} \ := \ \lim_{t \to \infty} \,\spar_t \qquad \hbox{and} \qquad \boot_{\infty} \ := \ \lim_{t \to \infty} \,\boot_t \qquad \hbox{exist}. \end{array} $$
 Here, we again identify configurations with the set of vertices in state~1.
 From now on, we call the two limit sets above, the {\bf infinite time limits} of the sparse best-response dynamics and bootstrap
 percolation, respectively.
 To prove the theorem, we first rely on the monotonicity results of the previous section to show that the infinite time limit of
 the sparse best-response dynamics properly rescaled in space dominates its counterpart for bootstrap percolation.
 The main ingredient is to couple both systems using the key function introduced in~\eqref{eq:expand-1}.
 Based on this coupling, we can directly deduce the theorem from its analog for bootstrap percolation on the infinite lattice starting
 from a product measure, a result due to Schonmann~\cite[Theorem~3.1]{schonmann_1992}.
\begin{lemma} --
\label{lem:limit}
 Assume that~$a_1 > a_2 > 0$. Then,
 $$ \Phi^n (\spar_s) \subset \spar_{\infty} \quad \hbox{almost surely for all} \quad s > 0 \quad \hbox{and} \quad n \geq 0. $$
\end{lemma}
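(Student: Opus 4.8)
\smallskip
The plan is to reduce the whole statement to the single fact that the infinite time limit $\spar_\infty$ is stable under the map $\Phi$, in the sense that $\Phi (\spar_\infty) \subset \spar_\infty$ almost surely, and then to conclude by induction on $n$. The case $n = 0$ is immediate: Lemma~\ref{lem:expand} tells us that $\spar_t$ is almost surely nondecreasing, so $\spar_\infty$ exists and $\spar_s \subset \spar_\infty$ for every $s > 0$. For the inductive step, I would assume $\Phi^n (\spar_s) \subset \spar_\infty$ and apply the monotonicity of $\Phi$ recorded in~\eqref{eq:expand-2} to obtain
$$ \Phi^{n + 1} (\spar_s) \ = \ \Phi (\Phi^n (\spar_s)) \ \subset \ \Phi (\spar_\infty) \ \subset \ \spar_\infty, $$
which closes the induction. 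So the real content is the stability $\Phi (\spar_\infty) \subset \spar_\infty$.

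To prove stability I would argue by contraposition, fixing a vertex $x \notin \spar_\infty$ and showing $x \notin \Phi (\spar_\infty)$. The key observation is that, although $\spar_\infty$ lives on the infinite lattice, everything relevant to $x$ is contained in its finite interaction neighborhood. Since $\spar_t$ increases to $\spar_\infty$, the counts $N_1 (x, \spar_t)$ and $N_2 (x, \spar_t)$ are monotone in $t$ and take values in $\{0, 1, \ldots, 2d \}$, so they must stabilize at their limiting values $N_j (x, \spar_\infty)$ after some almost surely finite random time $t_0$. The Poisson clock $(N_t (x) : t \geq 0)$ driving $x$ fires infinitely often, so I can pick an arrival time $T := T_n (x) > t_0$. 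At this update $x$ cannot be added to the configuration, because $x \notin \spar_\infty$ forces $x \notin \spar_T$; by the update rule of the graphical representation this means $a_1 \,N_1 (x, \spar_{T-}) \leq a_2 \,N_2 (x, \spar_{T-})$.

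Since $T > t_0$, the neighborhood of $x$ has already frozen, so $N_j (x, \spar_{T-}) = N_j (x, \spar_\infty)$ and hence $a_1 \,N_1 (x, \spar_\infty) \leq a_2 \,N_2 (x, \spar_\infty)$. Going back to the definition~\eqref{eq:expand-1} of $\Phi$, the strict inequality $a_1 \,N_1 > a_2 \,N_2$ therefore fails at $x$, and the alternative clause requires $x \in \spar_\infty$, which is false; thus $x \notin \Phi (\spar_\infty)$, as wanted. I expect the stability step to be the main obstacle, since it is the only place where the infinite-time, infinite-volume nature of $\spar_\infty$ enters: the argument works precisely because monotonicity confines the dynamics near $x$ to a finite window that freezes in finite time, after which one firing of the clock at $x$ would drag $x$ into $\spar_\infty$ unless $x$ already fails the best-response condition.
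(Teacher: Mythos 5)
Your proof is correct, but it factors the argument differently from the paper. The paper's induction carries the probabilistic content inside the inductive step: it fixes $x \in \Phi^{n+1}(\spar_s) \setminus \Phi^n(\spar_s)$, waits until the a.s.\ finite time $\tau_x$ by which all neighbors of $x$ lying in $\Phi^n(\spar_s) \subset \spar_\infty$ have joined the process, observes that monotonicity then makes $a_1 N_1 (x, \spar_t) > a_2 N_2 (x, \spar_t)$ for all $t > \tau_x$, and concludes that the next ring of the clock at $x$ adds it, so $x \in \spar_\infty$. You instead isolate the single stability statement $\Phi (\spar_\infty) \subset \spar_\infty$, proved by contraposition (if $x \notin \spar_\infty$, its neighborhood counts freeze at an a.s.\ finite time, and any later clock ring at $x$ would absorb it unless $a_1 N_1 (x, \spar_\infty) \leq a_2 N_2 (x, \spar_\infty)$, which forces $x \notin \Phi(\spar_\infty)$), after which the induction is purely formal via the monotonicity of $\Phi$ in \eqref{eq:expand-2}. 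The underlying mechanism is identical in both arguments --- monotonicity of $\spar_t$, finiteness of the interaction neighborhood, and the fact that the Poisson clock at $x$ rings infinitely often --- but your decomposition is arguably cleaner, since the probabilistic work is done exactly once and the set $\spar_\infty$ is exhibited as $\Phi$-stable, a statement of independent interest; the paper's direct version has the minor advantage of explicitly producing the finite hitting times $T_x$ for the vertices being absorbed. All the steps you sketch go through: $x \notin \spar_\infty$ does imply $x \notin \spar_T$ for every $T$ because the sparse process is nondecreasing, and the failure of the update at such a $T > t_0$ does yield the non-strict inequality you need.
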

\begin{proof}
 We prove the result by induction with respect to~$n$. \vspace*{4pt} \\
{\bf Base case} --
 This follows from~Lemma~\ref{lem:expand} which gives
 $$ P \,(\Phi^0 (\spar_s) \subset \spar_{\infty}) \ = \ P \,(\spar_s \subset \spar_{\infty}) \ \geq \ P \,(\spar_s \subset \spar_t \ \hbox{for all} \ s < t) \ = \ 1. $$
{\bf Inductive step} -- Assume~$\Phi^n (\spar_s) \subset \spar_{\infty}$ and~$x \in \Phi^{n + 1} (\spar_s) \setminus \Phi^n (\spar_s)$. Then,
\begin{equation}
\label{eq:limit-1}
  \begin{array}{rrl}
      T_y & := & \inf \,\{T > 0 : y \in \spar_T \} < \infty \ \ \hbox{a.s.} \ \ \hbox{for all} \ \ y \in \Phi^n (\spar_s) \ \ \hbox{and} \vspace*{4pt} \\
    \tau_x & := & \max \,\{T_y : y \sim x \ \hbox{and} \ y \in \Phi^n (\spar_s) \} < \infty \ \ \hbox{a.s.} \end{array}
\end{equation}
 In addition, the choice of~$x$ implies that
\begin{equation}
\label{eq:limit-2}
  a_1 \,N_1 (x, \Phi^n (\spar_s)) \ > \ a_2 \,N_2 (x, \Phi^n (\spar_s)) \quad \hbox{since} \quad x \in \Phi \,(\Phi^n (\spar_s)) \setminus \Phi^n (\spar_s)
\end{equation}
 while a new application of Lemma~\ref{lem:expand} gives
\begin{equation}
\label{eq:limit-3}
  N_1 (x, \spar_t) \ \geq \ N_1 (x, \Phi^n (\spar_s)) \quad \hbox{and} \quad N_2 (x, \spar_t) \ \leq \ N_2 (x, \Phi^n (\spar_s))
\end{equation}
 for all~$t > \tau_x$.
 Combining~\eqref{eq:limit-2}--\eqref{eq:limit-3} and using that~$a_1 > 0$ and~$a_2 > 0$, we get
 $$ \begin{array}{rcl}
      a_1 \,N_1 (x, \spar_t) & \geq & a_1 \,N_1 (x, \Phi^n (\spar_s)) \vspace*{4pt} \\
                                     & > & a_2 \,N_2 (x, \Phi^n (\spar_s)) \ \geq \ a_2 \,N_2 (x, \spar_t) \quad \hbox{for all} \quad t > \tau_x. \end{array} $$
 It follows that, given that the player at vertex~$x$ follows strategy~2 after time~$\tau_x$, she switches to strategy~1 at rate one.
 This together with~\eqref{eq:limit-1} implies that
\begin{equation}
\label{eq:limit-4}
  T_x \ = \ \inf \,\{t > 0 : x \in \spar_t \} < \infty \ \ \hbox{a.s.} \quad \hbox{therefore} \quad x \in \spar_{\infty}.
\end{equation}
 Finally, using consecutively~\eqref{eq:expand-2} and~\eqref{eq:expand-7} and then~\eqref{eq:limit-4}, we deduce that
 $$ \begin{array}{l}
    \Phi^n (\spar_s) \ \subset \ \Phi \,(\Phi^n (\spar_s)) \ = \ \Phi^{n + 1} (\spar_s) \vspace*{4pt} \\ \hspace*{25pt} \hbox{and} \quad
    \Phi^{n + 1} (\spar_s) \ = \ (\Phi^{n + 1} (\spar_s) \setminus \Phi^n (\spar_s)) \,\cup \,\Phi^n (\spar_s) \ \subset \ \spar_{\infty} \end{array} $$
 which shows the result at step~$n + 1$ and completes the proof.
\end{proof} \\ \\
 We are now ready to prove that the infinite time limit of the best-response dynamics properly rescaled in space dominates the infinite
 time limit of bootstrap percolation.
 More precisely, we look at the best-response dynamics viewed at the hypercube level by introducing
\begin{equation}
\label{eq:hypercubic}
 \hypc_t : \Z^d \longrightarrow \{0, 1 \} \quad \hbox{where} \quad \hypc_t (z) \ := \ \ind \{H_z \subset \spar_t \} \quad \hbox{for all} \quad z \in \Z^d. 
\end{equation}
 From now on, we call this process the {\bf hypercubic} best-response dynamics.
 Identifying once more configurations with the set of vertices in state~1 and using again the monotonicity of the sparse best-response dynamics
 given by~Lemma~\ref{lem:expand}, we note that
 $$ \begin{array}{rcl}
    \hypc_{\infty} \ := \ \lim_{t \to \infty} \,\hypc_t & = & \lim_{t \to \infty} \,\{z : H_z \subset \spar_t \} \vspace*{4pt} \\
                                                        & = & \{z : H_z \subset \lim_{t \to \infty} \,\spar_t \} \ = \ \{z : H_z \subset \spar_{\infty} \} \end{array} $$
 therefore the infinite time limit~$\hypc_{\infty}$ is well-defined.
\begin{lemma} --
\label{lem:coupling}
 Assume that~$a_1 > a_2 > 0$ and~$m = d$. Then,
 $$ \begin{array}{l} \boot_{\infty} \subset \hypc_{\infty} \quad \hbox{almost surely whenever} \quad \boot_0 = \hypc_0. \end{array} $$
\end{lemma}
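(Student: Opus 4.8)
The plan is to prove the time-explicit strengthening $\boot_n \subset \hypc_\infty$ for every $n \in \N$ by induction on the number $n$ of bootstrap updates, and then let $n \to \infty$. Both processes are nondecreasing: for the bootstrap dynamics this is built into its update rule, while for the hypercubic dynamics it follows from Lemma~\ref{lem:expand} through the definition~\eqref{eq:hypercubic}, since $H_z \subset \spar_s \subset \spar_t$ forces $\hypc_s (z) \leq \hypc_t (z)$. In particular their infinite time limits are the increasing unions $\boot_\infty = \bigcup_n \boot_n$ and $\hypc_\infty = \bigcup_t \hypc_t$, so once the induction is complete, taking the union over $n$ yields the lemma. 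The base case $n = 0$ is immediate from the hypothesis $\boot_0 = \hypc_0$ together with $\hypc_0 \subset \hypc_\infty$.

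For the inductive step I would assume $\boot_n \subset \hypc_\infty$ and fix an index $z \in \boot_{n + 1} \setminus \boot_n$, the goal being to show $z \in \hypc_\infty$, that is $H_z \subset \spar_\infty$. The bootstrap rule with $m = d$ provides at least $d$ occupied neighbors of $z$ in $\boot_n$, say $z + v_1, \dots, z + v_d$ with the $v_j$ distinct unit vectors; by the induction hypothesis each satisfies $H_{z + v_j} \subset \spar_\infty$. Writing $C := H_{z + v_1} \cup \cdots \cup H_{z + v_d}$, this is a finite set contained in the increasing union $\spar_\infty = \bigcup_t \spar_t$, so $C \subset \spar_s$ for some a.s.\ finite time $s > 0$. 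It therefore suffices to prove the deterministic local claim that $H_z \subset \Phi^k (C)$ for some finite $k$: indeed, the monotonicity of $\Phi$ in~\eqref{eq:expand-2} then gives $\Phi^k (C) \subset \Phi^k (\spar_s)$, and Lemma~\ref{lem:limit} gives $\Phi^k (\spar_s) \subset \spar_\infty$, whence $H_z \subset \spar_\infty$ and the induction closes.

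The heart of the matter---and the step I expect to be the main obstacle---is this local filling claim, which I would attack by translating $z$ to the origin and exhibiting an explicit cascade of $\Phi$-updates. The arithmetic is favorable: since $N_1 + N_2 = 2d$ and $a_1 > a_2 > 0$, membership $a_1 N_1 > a_2 N_2$ is equivalent to $N_1 > 2 d \,a_2 \,(a_1 + a_2)^{-1}$, and the right-hand side is strictly smaller than $d$, so \emph{any} vertex with at least $d$ type~1 neighbors is added by $\Phi$. The geometry of the partition into the cubes $H_z = 2z + \{0, 1\}^d$ is what I would exploit: a vertex $x \in H_0$ has, in each coordinate direction $i$, exactly one neighbor outside $H_0$, lying in $H_{e_i}$ if $x_i = 1$ and in $H_{-e_i}$ if $x_i = 0$, its remaining $d$ neighbors being internal to $H_0$. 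When the filled neighbors $H_{v_1}, \dots, H_{v_d}$ span all $d$ coordinate directions there is a ``corner'' vertex $c \in H_0$ whose $d$ external neighbors all lie in $C$; this $c$ satisfies $N_1 (c, C) = d$ and is added at the first step, after which each newly occupied vertex contributes an internal type~1 neighbor to the adjacent vertices of $H_0$, and the occupied region sweeps across $H_0$ in finitely many further applications of $\Phi$. The delicate point, which I would need to pin down, is exactly the hypothesis that the filled neighbors cover all $d$ directions: two neighbors occupying opposite faces of the same direction leave a direction uncovered and do not initiate the cascade, so the crux of the coupling is to verify that the bootstrap rule with parameter $m = d$ indeed supplies filled neighbors covering enough distinct coordinate directions for the corner vertex to exist.
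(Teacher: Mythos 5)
Your proposal follows the paper's route almost exactly: reduce the lemma to a local, deterministic filling claim for a single hypercube $H_z$ flanked by $d$ fully occupied neighboring hypercubes, establish that claim by iterating $\Phi$ (a corner vertex of $H_z$ is added first and the occupied region then sweeps across $H_z$ by Hamming weight, which is the content of the paper's inclusion~\eqref{eq:coupling-4}), and convert the deterministic statement into an almost sure one about $\spar_{\infty}$ via Lemma~\ref{lem:limit}; your outer induction on bootstrap time steps versus the paper's direct verification of the two bootstrap axioms for $\hypc_t$ is only a cosmetic difference. The one point where you stop short --- whether the $d$ occupied neighboring hypercubes supplied by the rule $m = d$ necessarily span all $d$ coordinate directions --- is precisely the point the paper disposes of with the phrase ``invoking the invariance by symmetry \ldots\ we may assume without loss of generality that $H_{z - e_j} \subset \spar_s$ for $j = 1, \ldots, d$,'' and you are right not to trust it: that reduction is not legitimate. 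A $d$-element subset of the $2d$ neighbors need not meet every coordinate direction (take $\{z + e_1, z - e_1\}$ in $d = 2$), and no signed coordinate permutation maps an opposite pair to a perpendicular pair. Moreover the obstruction is real rather than an artifact of the method: in $d = 2$, with $H_{z - e_1}$ and $H_{z + e_1}$ occupied and everything else of type~$2$, every vertex of $H_z$ has $N_1 = 1$ and $N_2 = 3$, so for $a_2 < a_1 \leq 3 a_2$ the map $\Phi$ adds nothing and the configuration is locally absorbing. Hence the site-wise domination of standard bootstrap percolation with parameter $m = d$ is not delivered by this argument, neither in your write-up nor in the paper's.

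The repair is the one your last paragraph is implicitly asking for: couple $\hypc_t$ with \emph{modified} bootstrap percolation, in which an empty site becomes occupied only if it has an occupied neighbor in each of the $d$ coordinate directions. Under that hypothesis your corner-vertex cascade goes through verbatim (after a reflection in each coordinate one really may assume the occupied neighbors are $H_{z - e_1}, \ldots, H_{z - e_d}$, since reflections do preserve the orbit in question), and the modified model is also known to have critical density zero on $\Z^d$ in every dimension --- the rectangle-growing arguments behind Schonmann's theorem apply to it as well --- so the deduction of the theorem in Lemma~\ref{lem:selfish} survives with the citation adjusted. Without that substitution, the induction step of your argument cannot be closed for bootstrap sites whose $d$ occupied neighbors fail to cover all coordinate directions.
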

\begin{proof}
 Let~$z \in \Z^d$ and~$s > 0$, and assume that
\begin{equation}
\label{eq:coupling-1}
  \hypc_s (z) = 0 \quad \hbox{and} \quad \card \,\{w \sim z : \hypc_s (w) = 1 \} \,\geq \,m = d.
\end{equation}
 Recalling~\eqref{eq:hypercubic}, this indicates that there are at least~$m = d$ hypercubes adjacent to~$H_z$ that are completely occupied by
 players of type~1.
 Invoking the invariance by symmetry of the best-response dynamics, we may assume without loss of generality that
\begin{equation}
\label{eq:coupling-2}
   H_{z - e_j} \subset \spar_s \quad \hbox{for} \quad j = 1, 2, \ldots, d \quad \hbox{where} \quad e_j := \hbox{$j$th unit vector}.
\end{equation}
 Since~$a_1 > a_2 > 0$, we also have
\begin{equation}
\label{eq:coupling-3}
  \Phi (\spar_s) \ \supset \ \{x \in \Z^d : N_1 (x, \spar_s) \geq N_2 (x, \spar_s) \} \ = \ \{x \in \Z^d : N_1 (x, \spar_s) \geq d \}.
\end{equation}
 Combining~\eqref{eq:coupling-2}--\eqref{eq:coupling-3} together with~Lemma~\ref{lem:expand} and some basic geometry, we get
\begin{equation}
\label{eq:coupling-4}
  \begin{array}{l} 2z + \{x \in \{0, 1 \}^d : \sum_{j = 1, 2, \ldots, d} \,x_j < n \} \ \subset \ \Phi^n (\spar_s) \quad \hbox{for} \quad n = 1, 2, \ldots, d + 1. \end{array} 
\end{equation}
 For an illustration in three dimensions, we refer to Figure~\ref{fig:bootstrap} where configuration~$\spar_s$ consists of the union of
 three hypercubes.
 In particular, taking~$n = d + 1$ gives
 $$ \begin{array}{l} H_z \ = \ 2z + \{x \in \{0, 1 \}^d : \sum_{j = 1, 2, \ldots, d} \,x_j \leq d \} \ \subset \ \Phi^{d + 1} (\spar_s). \end{array} $$
 Applying~Lemma~\ref{lem:limit}, we then obtain
\begin{equation}
\label{eq:coupling-5}
  H_z \ \subset \ \Phi^{d + 1} (\spar_s) \ \subset \ \spar_{\infty} \quad \hbox{therefore} \quad \hypc_t (z) = 1 \ \ \hbox{for some time~$t < \infty$ a.s.}
\end{equation}
 In addition, since the hypercubic process clearly inherits the monotonicity property of the sparse best-response dynamics given by~Lemma~\ref{lem:expand},
\begin{equation}
\label{eq:coupling-6}
 \begin{array}{rcl} \hypc_s (z) = 1 & \hbox{implies that} & \hypc_t (z) = 1 \quad \hbox{for all} \quad t > s. \end{array} 
\end{equation}
 In summary, \eqref{eq:coupling-6} and the fact that~\eqref{eq:coupling-1} implies~\eqref{eq:coupling-5} indicate that:
 for the hypercubic process, once a vertex is occupied it remains occupied forever, and if an empty vertex has at least~$d$ occupied neighbors then
 it becomes occupied after an almost surely finite time.
 Recalling the evolution rules of bootstrap percolation with parameter~$m = d$, the result follows.
\begin{figure}[t]
 \centering
 \includegraphics[width=0.85\textwidth]{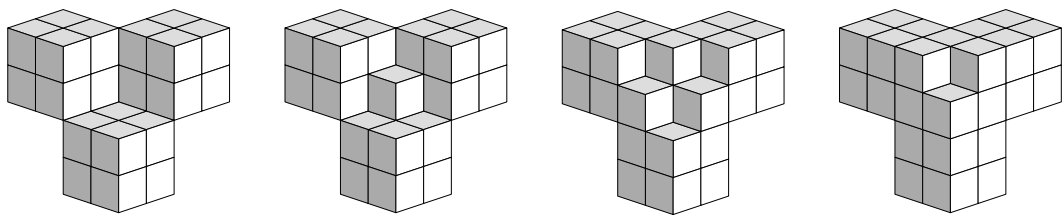}
 \caption{\upshape{Picture of the progression~$\spar_s \to \Phi (\spar_s) \to \Phi^2 (\spar_s) \to \Phi^3 (\spar_s)$ starting from the union of three
  hypercubes adjacent to the same hypercube.
  The figure gives an illustration of the inclusions in~\eqref{eq:coupling-4}.}}
\label{fig:bootstrap}
\end{figure}
\end{proof} \\ \\
 Combining the previous lemma with a result of Schonmann~\cite[Theorem 3.1]{schonmann_1992} on bootstrap percolation on the infinite lattice, we now deduce the theorem.
\begin{lemma} --
\label{lem:selfish}
 Assume that~$a_1 > a_2 > 0$ and~$p > 0$. Then,
 $$ \begin{array}{l} \lim_{t \to \infty} \,P \,(\best_t (x) = 1) \ = \ 1 \quad \hbox{for all} \quad x \in \Z^d. \end{array} $$
\end{lemma}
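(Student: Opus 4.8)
The plan is to chain the three monotonicity/coupling lemmas already established and close the argument with Schonmann's theorem on bootstrap percolation. By Lemma~\ref{lem:attractive} together with the reduction $\spar_0 \subset \best_0$, we have $P(x \in \spar_t) \le P(x \in \best_t)$ for every $(x,t)$, so it suffices to prove that $P(x \in \spar_\infty) = 1$ for all $x$; since $\{x \in \spar_t\}$ increases to $\{x \in \spar_\infty\}$ by Lemma~\ref{lem:expand}, this is equivalent to $\lim_{t \to \infty} P(x \in \spar_t) = 1$.

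First I would pin down the initial law of the hypercubic process. Because the hypercubes $\{H_z : z \in \Z^d\}$ are pairwise disjoint and partition $\Z^d$, a point of $H_z$ lies in $\spar_0$ only through $H_z$ itself, so $\hypc_0(z) = \ind\{H_z \subset \spar_0\} = \ind\{H_z \subset \best_0\}$. Since $\best_0$ is a product measure with $P(\best_0(x) = 1) = p$ and the hypercubes are disjoint, the events $\{H_z \subset \best_0\}$ are independent with common probability $p^{2^d}$. Hence $\hypc_0$ is itself a product measure on the coarse-grained lattice (again $\Z^d$ in the variable $z$) with strictly positive density $p^{2^d} > 0$.

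Next I would couple bootstrap percolation with parameter $m = d$ to the hypercubic process by setting $\boot_0 = \hypc_0$, so that Lemma~\ref{lem:coupling} gives $\boot_\infty \subset \hypc_\infty$ almost surely. At this point I invoke Schonmann~\cite[Theorem~3.1]{schonmann_1992}: for bootstrap percolation on $\Z^d$ with threshold $m = d$ started from any product measure of positive density, the infinite time limit is the fully occupied lattice almost surely. Therefore $\boot_\infty = \Z^d$ a.s., the domination forces $\hypc_\infty = \Z^d$ a.s., i.e.\ $H_z \subset \spar_\infty$ for every $z$, and consequently $\spar_\infty = \Z^d$ almost surely. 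Thus $P(x \in \spar_\infty) = 1$ for every $x$, and combining this with the attractiveness bound from the first step yields $\lim_{t \to \infty} P(\best_t(x) = 1) = 1$.

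The main obstacle is not any single computation but verifying that the hypotheses of Schonmann's theorem are met exactly after the two coarse-graining reductions: that the hypercube partition genuinely yields an i.i.d.\ field on the $z$-lattice (so that no spatial correlations leak in), and that the threshold $m = d$ produced by Lemma~\ref{lem:coupling} is precisely the value for which $p_c = 0$ in dimension $d$. One also has to check that the almost-sure filling $\spar_\infty = \Z^d$ transfers back to the original process as convergence of probabilities, which is where attractiveness and the monotone convergence of $P(x \in \spar_t)$ re-enter. The earlier lemmas have been designed so that each of these alignments holds, so the remaining work is bookkeeping rather than new estimates.
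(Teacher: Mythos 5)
Your proposal is correct and follows essentially the same route as the paper: reduce to the sparse process via Lemma~\ref{lem:attractive}, observe that the hypercubic initial configuration is a product measure at density $p^{2^d}>0$, couple with bootstrap percolation at threshold $m=d$ via Lemma~\ref{lem:coupling}, and invoke Schonmann's theorem to conclude $\boot_\infty=\Z^d$ and hence $\spar_\infty=\Z^d$ almost surely. Your explicit verification that the disjointness of the hypercubes $H_z$ makes $\hypc_0$ a genuine i.i.d.\ field is a point the paper leaves implicit, but otherwise the arguments coincide.
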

\begin{proof}
 To begin with, we consider bootstrap percolation with parameter~$m$ starting from the product measure with density~$q$.
 That is, the initial configuration satisfies
 $$ P \,(\boot_0 (z_1) = \boot_0 (z_2) = \cdots = \boot_0 (z_n) = 1) \ = \ q^n \quad \hbox{for} \quad z_1, z_2, \ldots, z_n \in \Z^d \quad \hbox{all distinct}. $$
 Whether the set of occupied vertices ultimately covers the entire lattice depends on the initial density and the fact that bootstrap
 percolation is clearly attractive motivates the introduction of the following critical value for the initial density:
 $$ \begin{array}{l} q_c \ := \ \inf \,\{q \in [0, 1] : P \,(\boot_{\infty} = \Z^d) = 1 \}. \end{array} $$
 Schonmann~\cite[Theorem 3.1]{schonmann_1992} proved that, when~$m \leq d$ we have~$q_c = 0$, therefore
 $$ \begin{array}{l} P \,(\boot_{\infty} = \Z^d) = 1 \quad \hbox{whenever} \quad m = d \quad \hbox{and} \quad q > 0. \end{array} $$
 In particular, taking~$q := p^{2^d}$ so that
 $$ P \,(\hypc_0 (z) = 1) \ = \ P \,(\spar_0 (x) = 1 \ \hbox{for all} \ x \in H_z) \ = \ p^{2^d} \ = \ q \ = \ P \,(\boot_0 (z) = 1), $$
 assuming that~$p > 0$ and applying Lemmas~\ref{lem:attractive}~and~\ref{lem:coupling}, we get
 $$ \begin{array}{rcl}
    \lim_{t \to \infty} \,P \,(\best_t (x) = 1) & \geq & \lim_{t \to \infty} \,P \,(\spar_t (x) = 1) \ = \ P \,(x \in \spar_{\infty}) \vspace*{4pt} \\
                                                & \geq &  P \,(\spar_{\infty} = \Z^d) \ = \ P \,(\hypc_{\infty} = \Z^d) \ \geq \ P \,(\boot_{\infty} = \Z^d) \ = \ 1 \end{array} $$
 which proves the lemma as well as Theorem~\ref{th:selfish}.
\end{proof}} \\

%%%%%%%%%%%%%%%%%%%%%%%%%%%%%%%%%%%%%%%%%%%%%%%%%%%%%%%%%%%%%%%%%%%%%%%%%%%%%%%%%%%%%%%%%%%%%%%%%%%%%%%%%%%%%%%%%%%%%%%%%%%%%%%%%%%%%%%%%%

\noindent\textbf{Acknowledgment}.
 The authors would like to thank Rick Durrett and an anonymous referee who independently underlined the connection between our model
 and bootstrap percolation, which has considerably strengthened our results, as well as another referee for useful comments.

%%%%%%%%%%%%%%%%%%%%%%%%%%%%%%%%%%%%%%%%%%%%%%%%%%%%%%%%%%%%%%%%%%%%%%%%%%%%%%%%%%%%%%%%%%%%%%%%%%%%%%%%%%%%%%%%%%%%%%%%%%%%%%%%%%%%%%%%%%

\end{document}